      \theoremstyle{plain}
      \newtheorem{Thm}{Theorem}[section]
      \newtheorem{Lem}[Thm]{Lemma}
      \theoremstyle{definition}
      \theoremstyle{remark}
      \newtheorem{Remark}[Thm]{Remark}
\newcommand{\N}{{\mathbb N}}     %
\newcommand{\R}{{\mathbb R}}     %
\newcommand{\bcp}{\mathrm{BC}_p}
\newcommand{\B}{\mathrm{B}}
\newcommand{\Ber}{\mathrm{B}}
\renewcommand{\epsilon}{\varepsilon}\renewcommand{\phi}{\varphi} %
\renewcommand{\rho}{\varrho}\renewcommand{\theta}{\vartheta}     %
\newcommand{\Kappa}{\mathrm{K}} \newcommand{\Mu}{\mathrm{M}}     %
\newcommand{\K}{\mathrm{K}}                                      %
\newcommand{\defn}[1]{\emph{#1}}   
\renewcommand{\[}{\begin{eqnarray*}}\renewcommand{\]}{\end{eqnarray*}}
\newcommand{\la}{\begin{eqnarray}}\newcommand{\al}{\end{eqnarray}}
\newcommand{\id}{{\mbox{\rm id}}}
\DeclareMathOperator*{\bigconv}{\mbox{\LARGE$\ast$}}
\newcommand{\pqu}{\overline{p}}
\newcommand{\1}{\mathbf 1}       
\begin{document} 
\author[Mattner]{Lutz Mattner} 
\address{Universit\"at Trier, Fachbereich IV -- Mathematik, 
	54286~Trier, Germany}
\email{mattner@uni-trier.de}
\author[Tasto]{Christoph Tasto} 
\address{Universit\"at Trier, Fachbereich IV -- Mathematik, 
	54286~Trier, Germany}
\email{tasto@uni-trier.de}

\title
[Confidence intervals for average success probabilities]%
{Confidence intervals for\\ average success probabilities}

\begin{abstract}
We provide Buehler-optimal one-sided and some valid two-sided 
confidence intervals for the average success probability of a possibly inhomogeneous
fixed length Bernoulli chain, based on the number of observed successes.
Contrary to some claims in the literature, the one-sided 
Clopper-Pearson intervals for the homogeneous case are not completely robust
here, not even if applied to hypergeometric estimation problems.
\end{abstract}

\subjclass[2000]{62F25, 62F35}

\keywords{Bernoulli convolution, binomial distribution inequality, Clopper-Pearson, 
hypergeometric distribution, inhomogeneous Bernoulli chain, 
Poisson-binomial distribution, robustness}

\thanks{This research was partially supported by DFG grant MA 1386/3-1.}

\maketitle

\section{Introduction and results}\label{Sec:1}
The purpose of this paper is to provide optimal one-sided (Theorem~\ref{Thm:lower})
and some valid two-sided (Theorems~\ref{Thm:main} and~\ref{Thm:Two-sided-CP_valid})
confidence intervals for the average success probability of a possibly inhomogeneous
fixed length Bernoulli chain, based on the number of observed successes.
For this situation, intervals proposed in the literature known to us are, 
if at all clearly specified, 
in the one-sided case either not optimal or erroneously claimed to be valid,  see 
Remarks \ref{Remark:Agnew-interval_one-sided} and~\ref{Rem:Wrong_claims}  below,
and in the two-sided case either improved here, see Remark~\ref{Rem:Two-sided_CP_Vs_Agnew}, 
or not previously proven to be valid. 

To be more precise, let $\B_p$ for $p\in[0,1]$, $\B_{n,p}$ for $n\in\N_0$ and $p\in[0,1]$, 
and $\bcp\coloneqq \bigconv\nolimits_{j=1}^n \Ber_{p_j}$ for $n\in\N_0$ and $p\in[0,1]^n$
denote the Bernoulli, binomial, and Bernoulli convolution (or Poisson-binomial)
laws with the indicated parameters. 
For $a,b\in \R\cup\{-\infty,\infty\}$ let 
$\mathopen]a,b\mathclose]\coloneqq \{x\colon a < x \leq b\}$
and let the other intervals be defined analogously.
Then, for $n\in\N$ and $\beta\in\mathopen]0,1\mathclose[$,
and writing $\pqu\coloneqq\frac1n\sum\nolimits_{j=1}^n p_j$ for $p\in [0,1]^n$,
we are interested in $\beta$-confidence regions for   
the estimation 
problem
\begin{align}                                                   \label{stmdl}
 \left( \left(\bcp\colon p\in[0,1]^n\right), [0,1]^n\ni p\mapsto\pqu\right),
\end{align}
that is, in functions $\Kappa\colon\{0,\ldots,n\}\rightarrow 2^{[0,1]}$ satisfying 
$ \bcp\left(\K \ni \pqu    \right) \ge  \beta$ for $p\in[0,1]^n$. Clearly,
every such $\Kappa$ is also a $\beta$-confidence region for the binomial
estimation problem
\begin{align}                             \label{binom2}
\left( (\B_{n,p}\colon p\in[0,1]),\id_{[0,1]} \right),
\end{align}
that is, 
satisfies 
$\B_{n,p}\left(\K \ni p\right) \ge \beta$ for $p\in[0,1]$,
but the converse is false by Remark~\ref{Rem:KCP_not_robust} below. 
However, a classical Chebyshev-Hoeffding result easily yields the following basic fact.

\begin{Thm}                            \label{Thm:main}
Let $n\in\N$ and $\beta\in\mathopen]0,1\mathclose[$. 
For 
$m\in\{0,\ldots,n\},$ let $\K'_m$ be a $\beta$-confidence
region for 
$\left( (\B_{m,p}\colon p\in[0,1]),\id_{[0,1]}\right)$.
Then a $\beta$-confidence region $\Kappa$ for~\eqref{stmdl} is given by
\begin{align*}
 \Kappa(x)  
   \coloneqq \mspace{-10mu}
   \bigcup\limits_{\genfrac{}{}{0pt}{}{l\in\{0,\dots,x\},}{m\in\{x-l,\dots,n-l\}}} \mspace{-10mu}	
      \left(\tfrac mn \K'_m(x-l) + \tfrac ln \right)
   \,\ \supseteq \,\ \Kappa_n'(x) 
  \quad\text{ for }x\in\{0,\ldots,n\}.
\end{align*}
\end{Thm}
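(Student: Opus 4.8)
The plan is to reduce the inhomogeneous problem to the homogeneous ones via the announced extremal result for Bernoulli convolutions. First, the asserted inclusion $\K(x)\supseteq\K'_n(x)$ is immediate: restricting the union to $l=0$, $m=n$ leaves the single term $\tfrac nn\K'_n(x)+\tfrac0n=\K'_n(x)$. It therefore remains to establish the confidence property $\mathrm{BC}_p(\K\ni\pqu)\ge\beta$ for each $p\in[0,1]^n$. Fix such a $p$, put $s\coloneqq\sum_{j=1}^n p_j=n\pqu$, and set $A\coloneqq\{x\in\{0,\dots,n\}:\pqu\in\K(x)\}$, so that $\mathrm{BC}_p(\K\ni\pqu)=\mathrm{BC}_p(A)$ and the set $A$ depends on $p$ only through $s$. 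I would prove the uniform lower bound $\mathrm{BC}_q(A)\ge\beta$ for every $q\in[0,1]^n$ with $\sum_{j=1}^n q_j=s$, and then specialize to $q=p$.

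The core computation concerns the three-valued configurations. Suppose $q$ has $l$ coordinates equal to $1$, has $m\ge1$ coordinates equal to a common value $c\in[0,1]$, and has the remaining $n-l-m$ coordinates equal to $0$, so that $l+mc=s$ and hence $c=\tfrac{n\pqu-l}{m}$. Then a variable distributed according to $\mathrm{BC}_q$ has the form $l+Y$ with $Y$ distributed according to $\B_{m,c}$. For any realization $x=l+y$ with $y\in\{0,\dots,m\}$ the index pair $(l,m)$ is admissible in the union defining $\K(x)$, since $0\le l\le x$ and $x-l=y\le m\le n-l$; consequently $c\in\K'_m(y)$ already forces $\pqu\in\tfrac mn\K'_m(x-l)+\tfrac ln\subseteq\K(x)$. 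Thus $\{y:c\in\K'_m(y)\}\subseteq\{y:\pqu\in\K(l+y)\}$, and the assumed validity of $\K'_m$ yields
\begin{align*}
 \mathrm{BC}_q(A)\;=\;\B_{m,c}\bigl(\{y:\pqu\in\K(l+y)\}\bigr)\;\ge\;\B_{m,c}(\K'_m\ni c)\;\ge\;\beta .
\end{align*}
The degenerate case $m=0$ is even simpler: then $s=l$, the law $\mathrm{BC}_q$ is the Dirac measure at $l$, and the term with indices $(l,0)$ in the union for $\K(l)$ reduces to the single point $\{\tfrac ln\}=\{\pqu\}$, so that $\mathrm{BC}_q(A)=1$.

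It remains to reduce an arbitrary $q$ with $\sum_j q_j=s$ to such a three-valued configuration, and here the classical Chebyshev--Hoeffding result enters. The function $q\mapsto\mathrm{BC}_q(A)=\sum_x\1_A(x)\,\mathrm{BC}_q(\{x\})$ is a polynomial, hence continuous, so it attains its minimum over the compact convex set $\{q\in[0,1]^n:\sum_j q_j=s\}$. Holding all but two coordinates $q_i,q_j$ fixed and keeping their sum constant, the law of the corresponding pair of Bernoulli summands, and therefore $\mathrm{BC}_q(A)$, is an affine function of the product $q_i q_j$; consequently a minimizer may be chosen in which, for every pair of coordinates lying in $\mathopen]0,1\mathclose[$, that pair sits at an endpoint of the admissible product range — so the two are either equalized or one of them is pushed into $\{0,1\}$ — and iterating, all interior coordinates share a single common value $c$. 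Such a minimizer is exactly a three-valued configuration, whence $\min\{\mathrm{BC}_q(A):\sum_j q_j=s\}\ge\beta$ by the previous paragraph, and in particular $\mathrm{BC}_p(A)\ge\beta$, as desired. The one genuinely delicate step is this extremal reduction — making the iterated pairwise variation terminate at a single interior value and disposing of the boundary configurations — whereas the transfer of the confidence level is then routine.
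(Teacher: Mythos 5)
Your architecture coincides with the paper's: the inclusion $\K(x)\supseteq\K'_n(x)$ via the term $(l,m)=(0,n)$, the reduction of an arbitrary $p$ to a three\--valued configuration ($l$ ones, $m$ coordinates at a common value $c$, the rest zeros) with the same mean, and the transfer of the level of $\K'_m$ through the single index pair $(l,m)$ in the union. That last computation of yours, including the degenerate case $m=0$ (which the paper passes over silently), is correct; in the paper the extremal law appears as $\delta_r\ast\B_{s,a}$ and the union defining $\K$ is bounded below by its $(r,s)$-term in exactly the same way. The one real difference is the extremal reduction itself: the paper does not prove it but cites Hoeffding~\cite[Corollary 2.1]{Hoef_1956} (the ``classical Chebyshev-Hoeffding result'' of the introduction), whereas you reprove it by pairwise variation, using that for $q_i+q_j$ fixed the law $\Ber_{q_i}\ast\Ber_{q_j}$, and hence $\mathrm{BC}_q(A)$, is affine in the product $q_iq_j$. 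That observation is correct and is essentially Hoeffding's own method; it buys you a self-contained proof, at the cost of having to make the reduction rigorous.

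However, your ``and iterating'' is precisely the step that is not yet a proof, as you yourself concede: pairwise equalization steps need not terminate, and later moves can destroy earlier equalizations. The standard repair avoids iteration altogether. The minimum of the polynomial $q\mapsto\mathrm{BC}_q(A)$ over the compact set $\{q\in[0,1]^n\colon\sum_j q_j=s\}$ is attained; among the minimizers choose one, say $q^\ast$, having the largest number of coordinates in $\{0,1\}$. If $q^\ast$ had two coordinates $q_i\neq q_j$ in $\mathopen]0,1\mathclose[$, then with $t\coloneqq q_i+q_j$ fixed the objective is affine in $\rho\coloneqq q_iq_j$, and the current value of $\rho$ lies in the open interior of its admissible range, since the maximum of $\rho$ requires $q_i=q_j$ and the minimum requires a coordinate in $\{0,1\}$; an affine function whose minimum over an interval is attained in the interior is constant, so the endpoint configuration with one coordinate pushed into $\{0,1\}$ is also a minimizer --- contradicting the choice of $q^\ast$. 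Hence all interior coordinates of $q^\ast$ are equal, $q^\ast$ is a three-valued configuration, and your core computation applies to it. With this fix (or by simply citing \cite[Corollary 2.1]{Hoef_1956}, as the paper does) your proof is complete.
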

Proofs of the three theorems of this paper are presented in section~\ref{Sec:2} below. 

If 
the above $\K_m'$ are taken  to be one-sided 
intervals of Clopper and Pearson~\cite{Clopper}, then the resulting $\K$ turns out to be Buehler-optimal 
and, if $\beta$ is not unusually small, the formula for $\K$ 
simplifies drastically, as stated in Theorem~\ref{Thm:lower} below  
for uprays:

A set $J\subseteq [0,1]$ is an \defn{upray in} $[0,1]$ 
if $x\in J, y\in[0,1], x\leq y$  jointly imply $y\in J$.
This is equivalent to $J$ being of the form $[a,1]$ 
or $]a,1]$ for some $a\in[0,1]$. A function $\Kappa\colon\{0,\ldots,n\}\rightarrow
2^{[0,1]}$ is 
an \defn{upray} if each of its values $\K(x)$ is an upray in $[0,1]$.

For $\beta\in\mathopen]0,1\mathclose[$ and with
\[
  g^{}_{n}(x) 
  &\coloneqq& g^{}_{n,\beta}(x)\,\  \coloneqq \,\  \text{ the } p \in[0,1] 
  \text{ with } \B_{n,p}(\{x,\dots,n\}) = 1-\beta
\]
for $n\in\N$ and $x\in\{1,\dots,n\}$,
which is well-defined due to the strict isotonicity of $p\mapsto\B_{n,p}(\{x,\dots,n\})$
and which yields in particular the special values
\begin{eqnarray}          \label{Eq:gn(1)}
 g^{}_{n}(1) = 1-\beta^{1/n} &\text{ and }& 
 g^{}_{n}(n) = \left(1-\beta\right)^{1/n}
\end{eqnarray}
and the fact that 
\begin{eqnarray}                                             \label{Eq:g_x_beta_monotone}
 g^{}_{n,\beta}(x) \,\  \text{ is strictly }\left\{\begin{array}{l}\text{increasing}\\ 
    \text{decreasing}\end{array}  \right\} \text{ in } \left\{\begin{array}{l} x\\ \beta \end{array}  \right\},
\end{eqnarray}
the Clopper-Pearson $\beta$-confidence uprays 
$\K^{}_{\mathrm{CP,}n}\colon\{0,\ldots,n\}\rightarrow2^{[0,1]}$
are given by 
\la             \label{clop}
\qquad
\K^{}_{\mathrm{CP,}n}(x) &\coloneqq& \K^{}_{\mathrm{CP,}n,\beta}(x) \,\ \coloneqq \,\  	
   \left\{ \begin{array}{ll}
   \left[0,1\right]               & \text{if } x=0,\\
   \left]g^{}_{n}(x), 1\right]   & \text{if } x\in\{1,\dots,n\}
  \end{array} \right\}
\al
for $n\in\N_0,$ and in particular
\begin{eqnarray*}                                     \label{Eq:KCP(1)}
 \qquad
 \K^{}_{\mathrm{CP,}n}(1)\,\ = \,\ \left]1-\beta^{1/n},1\right]  & \text{ and } &  
 \K^{}_{\mathrm{CP,}n}(n) \,\ =\,\ \left] (1-\beta)^{1/n},1\right]
\end{eqnarray*} 
for $n\in\N.$

An upray $\Kappa\colon\{0,\ldots,n\}\rightarrow 2^{[0,1]}$ is
\defn{isotone} 
if it  is isotone with respect to the usual order on $\{0, \ldots,n\}$ and the order reverse 
to set inclusion on $2^{[0,1]}$, that is, if we have the implication 
\[
 x,y\in \{0, \ldots,n\},\, x < y &\Rightarrow& \Kappa(x) \supseteq \Kappa(y),
\]
and \defn{strictly isotone} if ``$\supseteq$'' above can be sharpened to ``$\supsetneq$''. 
For example, each of the above $\K^{}_{\mathrm{CP,}n}$ is strictly isotone by \eqref{Eq:g_x_beta_monotone} and \eqref{clop}.
An isotone $\beta$-confidence upray 
for~\eqref{stmdl}
is \mbox{\defn{(Buehler-)optimal}} 
(see Buehler~\cite{Buehler} and the recent discussion by Lloyd and Kabaila~\cite{LloydKabaila2010}, prompted by
rediscoveries by Wang~\cite{Wang}) 
if every other 
isotone $\beta$-confidence upray $\Kappa^\ast$ for~\eqref{stmdl}
satisfies $\Kappa(x)\subseteq\Kappa^\ast(x) $ for every $x\in\{0,\ldots,n\}$. 
Finally, a not necessarily isotone $\beta$-confidence upray $\K$ for  
\eqref{stmdl} is \defn{admissible} in the set of all 
confidence uprays for \eqref{stmdl} if for every other $\beta$-confidence upray $\K^\ast$ 
for \eqref{stmdl} with $\K^\ast(x)\subseteq\K(x)$ for each $x\in\{0,\dots,n\}$ we have 
$\K^\ast=\K.$

Let us put 
\begin{eqnarray*}
 \beta_n &\coloneqq& \mathrm{B}_{n,\frac1n}(\{0,1\}) \qquad\text{ for }n\in\N,
\end{eqnarray*}
so that $\beta_1=1$, $\beta_2=\frac34$, $\beta_3 = \frac{20}{27}$, and 
$\beta_n\downarrow \frac2{\mathrm{e}} = 0.735\ldots$, with the strict 
antitonicity of $(\beta_n)$ following from 
Jogdeo and Samuels~\cite[Theorem 2.1 with $m_n\coloneqq n, p_n\coloneqq \tfrac{1}{n}, r\coloneqq0$]{Jogdeo}
so that we have in particular 
\[
  \beta_n &\le& \tfrac34 \qquad\text{ for }n\ge 2.
\]
 
\begin{Thm}                                                         \label{Thm:lower}
Let $n\in\N$ and $\beta \in\mathopen]0,1\mathclose[$, 
and let $\K$ be as in 
Theorem~\ref{Thm:main} with the $\K_m'\coloneqq\K^{}_{\mathrm{CP},m}$ as defined in~\eqref{clop}.
Then $\K$ is the optimal isotone $\beta$-confidence upray for~\eqref{stmdl},
is admissible in the set of all $\beta$-confidence uprays for \eqref{stmdl},  
is strictly isotone, and has the effective level   
$\inf_{p\in[0,1]^n} \bcp \left(\K \ni \pqu\right) = \beta$.
We have
\la                              \label{Eq:K_simple}
 \K(x)&=&\left\{\begin{array}{ll}
                \left[0,1\right]                    & \text{if } x=0,\\
	        \left]\frac{1-\beta}{n},1\right]    & \text{if } x=1,\\
                \left]g^{}_{n}(x), 1\right] & \text{if } x\in\{2,\dots,n\}
                        \text{ and }  \beta \geq \beta_n  .
          \end{array}\right.
\al
\end{Thm}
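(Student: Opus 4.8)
The plan is to handle every upray $\K^\ast\colon\{0,\dots,n\}\to 2^{[0,1]}$ through its left endpoint, writing $\K^\ast(x)=\mathopen]L^\ast(x),1\mathclose]$ (with $L^\ast(x)=0$ and a closed left end in the degenerate case $[0,1]$), so that validity reads $\bcp\bigl(L^\ast(X)<\pqu\bigr)\ge\beta$ for all $p$ and isotonicity means $L^\ast$ is nondecreasing. The conceptual core is a Buehler-type maximality lemma: for \emph{every} valid isotone $\beta$-confidence upray $\K^\ast$ one has $L^\ast(x)\le L(x)$ for all $x$, where $L(x)\coloneqq\inf\{\pqu\colon p\in[0,1]^n,\ \bcp(\{x,\dots,n\})>1-\beta\}$. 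I would prove this by contradiction: if $L^\ast(x_0)>L(x_0)$, pick $p$ with $\bcp(\{x_0,\dots,n\})>1-\beta$ and $\pqu<L^\ast(x_0)$; since $L^\ast$ is nondecreasing, $\pqu\notin\K^\ast(X)$ whenever $X\ge x_0$, whence the coverage at $p$ is at most $\bcp(\{0,\dots,x_0-1\})<\beta$, contradicting validity. This step is unconditional and uses nothing about Clopper--Pearson.

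To deduce optimality I would record that the $\K$ of Theorem~\ref{Thm:main} with Clopper--Pearson inputs is a union of half-open blocks whose left endpoints are the numbers $\overline{p^{(k,m)}}$, where $p^{(k,m)}$ is the configuration with $x-k$ coordinates equal to $1$, $m$ coordinates equal to $g^{}_m(k)$, and the rest $0$; by definition of $g^{}_m(k)$ each satisfies $\mathrm{BC}_{p^{(k,m)}}(\{x,\dots,n\})=1-\beta$, so perturbing one of the $g^{}_m(k)$-coordinates upward exhibits members of the infimum set with mean $\overline{p^{(k,m)}}+\epsilon$, giving $\overline{p^{(k,m)}}\ge L(x)$. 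Hence every block lies in $\mathopen]L(x),1\mathclose]$, and together with the maximality lemma this yields $\K(x)\subseteq\mathopen]L(x),1\mathclose]\subseteq\K^\ast(x)$, i.e.\ Buehler-optimality; applying the lemma to $\K$ itself (which is isotone, as is checked directly from the construction and is strict by~\eqref{Eq:g_x_beta_monotone}) also gives $L=L(\cdot)$ exactly. The effective level is pinned down at the homogeneous configuration $p=(g^{}_n(n),\dots,g^{}_n(n))$, for which $\pqu=g^{}_n(n)=L(n)$ and, by strict isotonicity, the coverage equals $\B_{n,g^{}_n(n)}(\{0,\dots,n-1\})=1-(1-\beta)=\beta$; admissibility then follows from this tightness, since shrinking $\K$ at any point would drop the coverage below $\beta$ at a suitable boundary configuration.

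It remains to make $L(x)$ explicit and use that the block $\K^{}_{\mathrm{CP},n}(x)=\mathopen]g^{}_n(x),1\mathclose]$ (the case $k=x,\ m=n$) lies in $\K(x)$, so that $\K(x)=\mathopen]L(x),1\mathclose]$. For $x=0$ the constraint is vacuous and $L(0)=0$. For $x=1$ the constraint is $\prod_j(1-p_j)<\beta$, and minimizing $\tfrac1n\sum_j p_j$ reduces, after passing to equal active coordinates, to minimizing $m(1-\beta^{1/m})$ over $m$; the elementary fact that $s\mapsto(1-\beta^s)/s$ is decreasing identifies the minimizer $m=1$ and hence $L(1)=\frac{1-\beta}{n}$ --- that is, here the single-Bernoulli configuration beats the homogeneous one, which is exactly why $x=1$ is exceptional.

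The crux, and the step I expect to be the main obstacle, is $x\ge 2$: I must show that when $\beta\ge\beta_n$ the mean is minimized, among all $p$ with $\bcp(\{x,\dots,n\})=1-\beta$, by the homogeneous $p=(g^{}_n(x),\dots,g^{}_n(x))$, so that $L(x)=g^{}_n(x)$. This is a genuine extremal inequality for Poisson--binomial tails. The benign competitors, those with mean at most $x-1$, are controlled by Hoeffding's classical comparison (homogeneity maximises an upper tail above the mean), so they cannot undercut $g^{}_n(x)$; the dangerous competitors are the polarized configurations $p^{(1,m)}$ carrying $x-1$ forced successes, whose mean \emph{exceeds} $x-1$, so that Hoeffding's hypothesis fails and a dedicated argument is needed. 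I would reduce to the worst such competitor and verify that the threshold $\beta\ge\beta_n$ is precisely the condition $g^{}_n(2)\le\frac1n$ (equivalently $\B_{n,1/n}(\{0,1\})\le\beta$), then propagate the borderline case to all $x\ge 2$ by a monotonicity argument in $x$. Establishing this inequality and the sharpness of the constant $\beta_n$ is where the real work lies.
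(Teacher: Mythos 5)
Your scaffolding for the structural claims is sound, and it reorganizes the paper's argument rather than reproducing it: you work abstractly with the Buehler bound $L(x)=\inf\{\pqu\colon \bcp(\{x,\dots,n\})>1-\beta\}$ and a maximality lemma, where the paper works with the explicit minimum formula $g(x)$ from the construction and proves optimality by exhibiting polarized configurations ($l$ coordinates equal to $1$, $m$ equal to some $a>g^{}_m(x'-l)$, the rest $0$); your effective-level computation at the homogeneous point $(g^{}_n(n),\dots,g^{}_n(n))$ replaces the paper's choice $p=(1-\beta,0,\dots,0)$, and both work. Two caveats on this part: your identification $\K(x)=\mathopen]L(x),1\mathclose]$ and your admissibility sketch both lean on (strict) isotonicity of $\K$, which you only assert is ``checked directly''; the verification is exactly the min-manipulation the paper carries out (and note that admissibility must be proved against \emph{all} valid uprays, not only isotone ones, so your boundary-configuration argument needs $\pqu$ chosen below $g(x'+1)$ as well, using strictness --- this is completable, but it is not a consequence of the effective level being $\beta$, as your ``follows from this tightness'' suggests).

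The genuine gap is the third case of~\eqref{Eq:K_simple}, and you have located it precisely but not closed it. You correctly reduce that case to the inequality $n\,g^{}_n(x)\le x-1$ for all $x\in\{2,\dots,n\}$ when $\beta\ge\beta_n$ (the paper's Lemma~\ref{Lem:Agnew_bounds_simplified}), combined with Hoeffding's comparison theorem, and you correctly observe that $\beta\ge\beta_n$ is equivalent to the single case $g^{}_n(2)\le\tfrac1n$. But the passage from $x=2$ to all $x\ge 2$ is not ``a monotonicity argument in $x$'': it is equivalent to $F^{}_{n,(x-1)/n}(x-1)\le F^{}_{n,1/n}(1)$ for $x-1\in\{1,\dots,n-1\}$, i.e.\ to the paper's Lemma~\ref{Lem:Binomial_inequality}, and the quantity $F^{}_{n,x/n}(x)$ is not handled by one monotonicity statement (indeed $F^{}_{n,n/n}(n)=1$, so no global decrease is available). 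The paper needs a genuinely two-part proof: for $x\le\frac{n-1}2$ a density-ratio estimate inside the integral representation $F^{}_{n,\frac xn}(x)-F^{}_{n,\frac{x+1}n}(x+1)=n\int_{x/n}^{(x+1)/n}f^{}_{n-1,p}(x)\dd p-f^{}_{n,\frac{x+1}n}(x+1)>0$, and for $x\ge\frac{n+1}2$ a reflection to the lower tail combined with Simmons' inequality $F^{}_{n,\frac kn}(k-1)>1-F^{}_{n,\frac kn}(k)$ from Jogdeo and Samuels~\cite{Jogdeo}. Since your proposal explicitly defers ``establishing this inequality'' as ``where the real work lies,'' the proof of the only quantitatively delicate claim of the theorem --- that the threshold $\beta_n$ suffices --- is missing.
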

\begin{Remark} \label{Rem:nested}
Nestedness is preserved by the construction in Theorem~\ref{Thm:main}:
Suppose that we apply   Theorem~\ref{Thm:main} to several $\beta\in\mathopen]0,1\mathclose[$
and that we accordingly write $\K'_{m,\beta}$ and $\K^{}_\beta$ in place of $\K'_m$ and $\K$.
If now $\beta,\tilde\beta\in  \mathopen]0,1\mathclose[$ with $\beta<\tilde\beta$ are such 
that  $\K'_{m,\beta}(x)\subseteq \K'_{m,\tilde\beta}(x) $ holds for $m\in\{0,\ldots,n\}$ and 
$x\in\{0,\ldots,m\}$,
then, obviously,  $\K_{\beta}(x)\subseteq \K_{\tilde\beta}(x)$ holds for $x\in\{0,\ldots,n\}$.
By the second line in~\eqref{Eq:g_x_beta_monotone} and by~\eqref{clop}, 
the Clopper-Pearson uprays are nested, 
and hence so are the uprays of Theorem~\ref{Thm:lower}.
Analogous remarks apply to the confidence downrays of Remark~\ref{Rem:Upper_as_lower}  
and to the two-sided confidence intervals of Theorem~\ref{Thm:Two-sided-CP_valid}.
\end{Remark}
\begin{Remark}                                                    \label{Rem:KCP_not_robust}  
Let $n\ge 2$ and $\beta\in\mathopen]0,1\mathclose[$. As  
noted by Agnew~\cite{Agnew} but ignored by later authors,
compare Remark~\ref{Rem:Wrong_claims} below,
$\K^{}_{\mathrm{CP,}n}$ is not a $\beta$-confidence region
for~\eqref{stmdl}. This is obvious from Theorem~\ref{Thm:lower}
and $\K^{}_{\mathrm{CP,}n}(1) \subsetneq\Kappa(1)$,
using either the optimality of $\Kappa$ and the isotonicity of $\K^{}_{\mathrm{CP,}n}$,
or the admissibility of  $\Kappa$ and $\K^{}_{\mathrm{CP,}n}(x)\subseteq\Kappa(x)$
for every $x$.
If $\beta\ge \beta_n$, then Theorem~\ref{Thm:lower} further implies that the effective level of
$\K^{}_{\mathrm{CP,}n}$ 
as a confidence region for~\eqref{stmdl} is 
\[
 \gamma_n &\coloneqq& 1-n\left(1-\beta^{1/n}\right) \,\ \in \,\ \mathopen]1+\log(\beta),\beta\mathclose[,
\]
as for $p\in[0,1]^n$ with $\pqu \notin \mathopen]\frac{1-\beta}{n},g^{}_{n}(1)\mathclose]$, 
formula \eqref{Eq:K_simple} yields
$\bcp \left(\K^{}_{\mathrm{CP,}n} \ni \pqu\right)$ $=$ $\bcp \left(\K \ni \pqu\right)$ $\geq$ $\beta,$
and considering
$p_1 = n g^{}_{n}(1)\le 1$ and $p_2 = \ldots = p_{n} = 0$ at the second step below yields
\[
 \inf\limits_{\pqu \in \mathopen]\frac{1-\beta}{n},g^{}_{n}(1)\mathclose]} \bcp \left(\K^{}_{\mathrm{CP},n} \ni \pqu \right) 
   &=& \inf\limits_{\pqu \in \mathopen]\frac{1-\beta}{n},g^{}_{n}(1)\mathclose]} \prod\limits_{j=1}^n(1-p_j)\\ 
   &=&  1-n g^{}_{n}(1) \,\ =\,\  \gamma_n.
\]
Since $\gamma_n\downarrow 1+\log(\beta)<\beta$ for $n\rightarrow\infty$,
it follows for $\beta>\frac2{\mathrm{e}}$ that the $\K^{}_{\mathrm{CP,}n}$ 
are not even asymptotic $\beta$-confidence regions for~\eqref{stmdl}.
\end{Remark}
\begin{Remark}                          \label{Remark:Agnew-interval_one-sided}
The only previous $\beta$-confidence upray for~\eqref{stmdl} known to us
was provided by Agnew~\cite[section 3]{Agnew} as 
$\Kappa_{\mathrm{A}}(x)\coloneqq[g^{}_{\mathrm{A}}(x),1]$ with $g^{}_{\mathrm{A}}(0)\coloneqq 0$ 
and $g^{}_{\mathrm{A}}(x)\coloneqq g^{}_n(x) \wedge \frac{x-1}n$ for $x\in\{1,\ldots,n\}$.
But $\Kappa_{\mathrm{A}}$ is strictly worse than the optimal isotone $\Kappa$ 
from  Theorem~\ref{Thm:lower}, since $\Kappa_{\mathrm{A}}$ is isotone as well, 
with $\Kappa_{\mathrm{A}}(1)=[0,1] \supsetneq \Kappa(1)$.
On the other hand,  Lemma~\ref{Lem:Agnew_bounds_simplified} below shows 
that actually $g^{}_{\mathrm{A}}(x)=g^{}_n(x)$ for  $\beta\ge \beta_n$
and $x\in\{2,\ldots,n\}$, which is a precise version of an unproven 
claim in the cited reference.
\end{Remark}
\begin{Remark}                                              \label{Rem:beta_n_necessary}
The condition  $\beta\ge \beta_n$ in~\eqref{Eq:K_simple}  can not be omitted:
        For $n\in\N$, let $A_n\coloneqq\{\beta\in\mathopen]0,1\mathclose[ \colon 
        \text{If } \Kappa\text{ is as in Theorem~\ref{Thm:lower}, then }
        \Kappa(x)= \mathopen]g^{}_n(x),1\mathclose]\text{ for }x\in\{2,\ldots,n\}\}$. 
        Then $\mathopen[\beta_n,1\mathclose[ \subseteq A_n$, by  Theorem~\ref{Thm:lower}.
        Numerically, we found for example  also $\beta_n-0.001  \in A_n$ for $2\le n\le 123$,
        but  $\K(2)\supsetneq \mathopen]g^{}_{n}(2),1\mathclose]$ 
        for $\beta = \beta_n-0.001$ and $124\le n\le 3000$.
\end{Remark}
\begin{Remark}  \label{Rem:K_not_admissible_as_interval}         
The $\beta$-confidence upray $\K$ for~\eqref{stmdl} from Theorem~\ref{Thm:lower} 
considered merely as a $\beta$-confidence interval 
shares with $\K^{}_{\mathrm{CP,}n}$ as a $\beta$-confidence interval for~\eqref{binom2}
the defect of not being admissible in the set of \emph{all} 
$\beta$-confidence intervals, since with 
$c\coloneqq \left(\inf\K(n)\right)\vee \left(1-(1-\beta)^{1/n}\right)$
and  
\[
	\K^\ast(x) &\coloneqq&\left\{\begin{array}{ll}
                             [0,c] \,\subsetneq\, \Kappa(0) & \text{if } x=0,\\
			     \K(x) & \text{if } x\in\{1,\dots,n\},
                            \end{array}
	\right.
\]
we have $\bcp(\K^\ast\ni\pqu) = \bcp(\K\ni\pqu) \ge \beta$ if $\pqu\leq c$,
and, if $\pqu>c,$
$\bcp(\K^\ast\ni\pqu)$ $=$ $\bcp(\{1,\dots,n\})$ $=$ $1-\prod\nolimits_{j=1}^n (1-p_j)$ 
	$\geq$ $1-(1-\pqu)^n > 1-(1-c)^n\geq \beta.$ 

\end{Remark}
\begin{Remark}                                              \label{Rem:Upper_as_lower}
Since $\Kappa$ is a $\beta$-confidence region for~\eqref{stmdl} iff
$\{0,\ldots,n\}\ni x\mapsto 1-\Kappa(n-x)$ is one, Theorem~\ref{Thm:lower} and
Remarks~\ref{Rem:nested}--\ref{Rem:K_not_admissible_as_interval} yield obvious
analogs for \defn{downrays}, that is confidence regions with each value being
$\mathopen[0,b\mathclose[$ or $[0,b]$ for some $b\in[0,1]$: A downray 
$\Lambda\colon\{0,\ldots,n\}\rightarrow 2^{[0,1]}$  is \defn{isotone} if 
$\Lambda(x)\subseteq \Lambda(y)$ holds for $x<y$. The Clopper-Pearson downrays
$\Lambda^{}_{\mathrm{CP,}n} \coloneqq \Lambda^{}_{\mathrm{CP,}n,\beta}$ 
defined by $\Lambda^{}_{\mathrm{CP,}n,\beta}(x)\coloneqq 1- \Kappa^{}_{\mathrm{CP,}n,\beta}(n-x)$
are isotone, and Theorem~\ref{Thm:lower} remains valid if we replace  
$\Kappa^{}_{\mathrm{CP,}m}$ by $\Lambda^{}_{\mathrm{CP,}m}$, upray by downray, and~\eqref{Eq:K_simple} 
by
\la                              \label{Eq:K_simple_downray}
 \qquad
 \K(x)&=&\left\{\begin{array}{ll}
               \left[0,1-g^{}_{n}(n-x)\right[ & \text{if } x\in\{0,\dots,n-2\}
                        \text{ and }  \beta \geq \beta_n,     \\
                \left[0, 1-\frac{1-\beta}{n}\right[                   & \text{if } x=n-1,\\
	        \left[0,1\right]    & \text{if } x=n.
 
          \end{array}\right.
\al
\end{Remark}
\begin{Remark}                                           \label{Rem:Wrong_claims}                    
Papers erroneously claiming the Clopper-Pearson uprays or downrays
to be  $\beta$-confidence regions for~\eqref{stmdl} include 
Kappauf and Bohrer~\cite[p.~652, lines 3--5]{Kappauf}, 
Byers et al.~\cite[p.~249, first column, lines 15--18]{Byers}, 
and Cheng et al.~\cite[p.~7, lines  10--8 from the bottom]{Cheng}.
The analogous claim of Ollero and Ramos~\cite[p.~247, lines 9--12]{Ollero}  
for a certain subfamily of $(\bcp\colon p\in[0,1]^n),$
which includes the hypergeometric laws with sample size parameter $n,$
is refuted in Remark~\ref{Remark:Hyp_not_valid} below.
The common source of  error in these papers seems to be 
an unclear remark of Hoeffding~\cite[p.~720, first paragraph of section 5]{Hoef_1956}
related to the fact that, by \cite[Theorem 4]{Hoef_1956} or by David~\cite{David},
certain tests for $p\mapsto p$ in the binomial model $(\mathrm{B}_{n,p}\colon p\in[0,1])$
keep their level as tests for $p\mapsto\overline{p}$ in $(\bcp\colon p\in[0,1]^n)$.
Let us further note that \cite{Ollero} should have cited 
Vatutin and Mikhailov~\cite{Vatutin} concerning 
the representability of hypergeometric laws as Bernoulli convolutions.
\end{Remark}
\begin{Remark}                                           \label{Rem:Hoeff_tests}
The core of the unclear remark in \cite{Hoef_1956} mentioned in Remark~\ref{Rem:Wrong_claims}
is ``that the usual (one-sided and two-sided) tests for the constant probability of `success'
in $n$ independent (Bernoulli) trials can be used as tests for the average probability of success when the probability of success varies from trial to trial.''
We specify and generalise this in the following way:
Let $n \in\N,$ $p_1 \leq p_2 \in [0,1],$ $\gamma_-,\gamma_+\in[0,1],$ 
$c_-\leq\lfloor np_1\rfloor -1,$ and $c_+\geq \lceil np_2 \rceil +1.$
Then the randomised test
\begin{align*}
 \psi \coloneqq \1_{\{0,\dots,c_{-}-1\}} + \gamma_{-}\1_{\{c_{-}\}} + \gamma_{+}\1_{\{c_{+}\}} +\1_{\{c_{+}+1,\dots,n\}} 
\end{align*}
for the hypothesis $[p_1,p_2]$ in the binomial model $\left(\B_{n,p}\colon p\in[0,1]\right)$
keeps its level as a randomised test for
$\left\{p\in[0,1]^n\colon\pqu\in[p_1,p_2]\right\}$ in the model $\left(\bcp\colon p\in[0,1]^n\right),$
because for every $p$ with $\pqu \in[p_1,p_2]$ it follows from \cite[Theorem 4]{Hoef_1956} that we have
  \begin{align*}
   \bcp \psi\,\ = &\,\ \gamma_- \bcp(\{0,\dots,c_-\}) + (1-\gamma_-)\bcp(\{0,\dots,c_--1\})\\
	       &\,\  +\gamma_+ \bcp(\{c_+,\dots,n\}) + (1-\gamma_+)\bcp(\{c_++1,\dots,n\})\\
	     \leq &\,\ \B_{n,\pqu}  \psi.
  \end{align*}
But this statement does not always apply to the one-sided tests based on the
Clopper-Pearson uprays:\\ 
Let $n=2$ and $\beta\in\mathopen]0,1\mathclose[.$ 
Let $r\in\mathopen[0,1\mathclose],$ $H\coloneqq\mathopen[0,r\mathclose],$ and $\psi\coloneqq \1_{\{\K_{\mathrm{CP},n}\cap H =\emptyset\}},$
so that we have $\sup\nolimits_{p\in H}\B_{n,p}\psi\leq 1-\beta.$
But, if for example $r=1-\sqrt{\beta},$ the test simplifies to $\psi=\1_{\{1,2\}},$ 
and for $p\coloneqq(r-\varepsilon,r+\varepsilon)$ for an $\varepsilon>0$ small enough,
we have $\overline{p}\in H$ and $\bcp\psi = 1 -\bcp(\{0\}) = 1-\beta+\varepsilon^2 >1-\beta.$
\end{Remark}                    

\begin{Remark}                          \label{Remark:Hyp_not_valid}
Clopper-Pearson uprays can be invalid for hypergeometric estimation 
problems: 
For $N\in\N_0,$ $n\in\{0,\dots,N\}$, and 
$p\in\left\{\tfrac{j}{N}\colon 
j\in\left\{0,\dots,N\right\}\right\},$ let $\mathrm{H}^{}_{n,p,N}$ denote the 
hypergeometric law of the number of red balls drawn in a simple random sample of size $n$ 
from an urn containing $Np$ red and $N(1-p)$ blue balls, so that we have 
$\mathrm{H}^{}_{n,p,N}(\{k\}) = \binom{Np}{k} \binom{N(1-p)}{n-k} / \binom{N}{n}$ for $k\in\N_0.$
For $\beta \in\mathopen]0,1\mathclose[$ and fixed $n$ and $N,$ in general, $\K^{}_{\mathrm{CP,}n}$ 
is not a $\beta$-confidence region for the estimation problem 
$\left(\left(\mathrm{H}^{}_{n,p,N}\colon p\in \left\{\tfrac{j}{N}\colon
j\in\left\{0,\dots,N\right\}\right\}\right), p\mapsto p\right),$
because if, for example, $n \geq 2$ and $\beta=\left(1-\tfrac 1N \right)^n,$ then for
$p=g^{}_{n}(1)$ we have $p = 1-\beta^{1/n} = \tfrac 1N$ and so 
$\mathrm{H}^{}_{n,p,N}\left(\K^{}_{\mathrm{CP,}n}\ni p\right) = \mathrm{H}^{}_{n,p,N}\left(\{0\}\right)
= \binom{N(1-p)}{n} / \binom{N}{n} = \prod\nolimits_{j=0}^{n-1}\tfrac{N(1-p)-j}{N-j} < (1-p)^n = \beta.$
\end{Remark}
%
In contrast to Remark~\ref{Rem:KCP_not_robust}, we have the following positive
result for the two-sided Clopper-Pearson $\beta$-confidence intervals $\Mu^{}_{\mathrm{CP,}n}$
for~\eqref{binom2}, as defined in~\eqref{Eq:Def_Mu_CP} below.
\begin{Thm}                                   \label{Thm:Two-sided-CP_valid}
Let $n\in\N$, $\beta\in\mathopen]0,1\mathclose[$, and 
\la                                                    \label{Eq:Def_Mu_CP}
 \qquad \Mu^{}_{\mathrm{CP,}n}(x)
  &\coloneqq&\Kappa^{}_{\mathrm{CP,}n,\frac{1+\beta}{2}}(x) \cap \Lambda^{}_{\mathrm{CP,}n,\frac{1+\beta}{2}}(x)
\quad\text{ for }x\in\{0,\ldots,n\}
\al
with $\Kappa^{}_{\mathrm{CP,}n,\frac{1+\beta}{2}}$ as in \eqref{clop} and 
$\Lambda^{}_{\mathrm{CP,}n,\frac{1+\beta}{2}}$ as in Remark~\ref{Rem:Upper_as_lower}.
If $\beta \ge2\beta_n - 1$ or $n=1$, hence in particular if $\beta\ge\frac12$, then 
$\Mu^{}_{\mathrm{CP,}n}$ is a  $\beta$-confidence interval for~\eqref{stmdl}.
\end{Thm}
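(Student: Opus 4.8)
The plan is to show that the two-sided interval $\Mu^{}_{\mathrm{CP,}n}$, being the intersection of a one-sided upray and a one-sided downray each at level $\tfrac{1+\beta}{2}$, has noncoverage probability at most $\beta$. The natural decomposition is
\[
 \bcp\bigl(\pqu\notin\Mu^{}_{\mathrm{CP,}n}\bigr)
   \,\ \le\,\ \bcp\bigl(\pqu\notin\Kappa^{}_{\mathrm{CP,}n,\frac{1+\beta}{2}}\bigr)
            + \bcp\bigl(\pqu\notin\Lambda^{}_{\mathrm{CP,}n,\frac{1+\beta}{2}}\bigr),
\]
so it would suffice to bound each one-sided noncoverage probability by $\tfrac{1-\beta}{2}$. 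The obstacle is that, by Remark~\ref{Rem:KCP_not_robust}, neither $\Kappa^{}_{\mathrm{CP,}n}$ nor $\Lambda^{}_{\mathrm{CP,}n}$ is itself a valid one-sided confidence region for~\eqref{stmdl}; the one-sided noncoverage can exceed the nominal $\tfrac{1-\beta}{2}$, with the excess concentrated near the endpoints $x=1$ and $x=n-1$ as exhibited in Remark~\ref{Rem:KCP_not_robust}. So a crude union bound cannot work on its own, and the point of the theorem is that the two one-sided failures essentially cannot occur simultaneously for the same $p$.

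First I would reduce to bounding, for each fixed $p\in[0,1]^n$, the quantity $\bcp(\pqu\notin\Mu^{}_{\mathrm{CP,}n})=\bcp(\{x\colon\pqu<g^{}_n(x)\}\cup\{x\colon\pqu>1-g^{}_n(n-x)\})$, using the explicit simplified forms~\eqref{Eq:K_simple} and~\eqref{Eq:K_simple_downray} (valid here because the hypothesis $\beta\ge 2\beta_n-1$ is exactly the condition $\tfrac{1+\beta}{2}\ge\beta_n$ needed to apply Theorem~\ref{Thm:lower} at level $\tfrac{1+\beta}{2}$). Writing $\K$ and $\Lambda$ for the optimal upray and downray of Theorem~\ref{Thm:lower} at level $\tfrac{1+\beta}{2}$, I would note that $\K$ and $\Lambda$ differ from the raw Clopper-Pearson rays only at the single exceptional argument ($x=1$ for the upray, $x=n-1$ for the downray). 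Since Theorem~\ref{Thm:lower} guarantees $\bcp(\pqu\in\K)\ge\tfrac{1+\beta}{2}$ and $\bcp(\pqu\in\Lambda)\ge\tfrac{1+\beta}{2}$, the union bound applied to the \emph{optimal} rays gives $\bcp(\pqu\notin\K\cap\Lambda)\le(1-\beta)$. The real work is therefore to recover the gap between $\Mu^{}_{\mathrm{CP,}n}=\Kappa_{\mathrm{CP}}\cap\Lambda_{\mathrm{CP}}$ and $\K\cap\Lambda$.

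The key step is to control the two correction terms coming from the endpoints. At $x=1$ one has $\K(1)=\mathopen]\tfrac{1-\beta'}{n},1\mathclose]$ with $\beta'=\tfrac{1+\beta}{2}$, strictly larger than $\Kappa_{\mathrm{CP}}(1)=\mathopen]g^{}_n(1),1\mathclose]$, and symmetrically at $x=n-1$ for the downray. So I would write
\[
 \bcp\bigl(\pqu\notin\Mu^{}_{\mathrm{CP,}n}\bigr)
  \,\ \le\,\ \bcp\bigl(\pqu\notin\K\cap\Lambda\bigr)
           + \bcp(\{1\})\,\1_{\left\{\pqu\in \K(1)\setminus\Kappa_{\mathrm{CP}}(1)\right\}}
           + \bcp(\{n-1\})\,\1_{\left\{\pqu\in \Lambda(n-1)\setminus\Lambda_{\mathrm{CP}}(n-1)\right\}},
\]
and the whole difficulty is to show the two correction indicators cannot both be active, and that when one is active the corresponding point mass $\bcp(\{1\})$ or $\bcp(\{n-1\})$ is small enough to be absorbed into the slack of the union bound. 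The mechanism is that the correction at $x=1$ is only relevant when $\pqu$ lies in the narrow band $\mathopen]\tfrac{1-\beta'}{n},g^{}_n(1)\mathclose]$, i.e.\ $\pqu$ small, which forces $\pqu$ far from the symmetric band near $1$ that activates the $x=n-1$ correction; thus at most one correction fires. For the quantitative bound I expect to use, as in Remark~\ref{Rem:KCP_not_robust}, the sharp extremal configuration (one coordinate carrying most of the mass, the rest zero) together with the Hoeffding comparison of Remark~\ref{Rem:Hoeff_tests} and a binomial-versus-Bernoulli-convolution inequality to estimate $\bcp(\{1\})$ when $\pqu$ is in the relevant band. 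The main obstacle, and the crux of the proof, is exactly this endpoint bookkeeping: establishing that the one-sided overshoot permitted by Remark~\ref{Rem:KCP_not_robust} at one tail is always compensated by genuine under-shoot (extra coverage) at the other tail, so that the two-sided coverage never drops below $\beta$.
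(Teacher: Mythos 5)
Your reduction coincides with the paper's: subadditivity at level $\gamma\coloneqq\frac{1+\beta}{2}$ applied to the optimal rays $\Kappa_\gamma$ and $\Lambda_\gamma$ of Theorem~\ref{Thm:lower} and Remark~\ref{Rem:Upper_as_lower}; the observation that $\beta\ge2\beta_n-1$ is exactly $\gamma\ge\beta_n$, so that by~\eqref{Eq:K_simple} and~\eqref{Eq:K_simple_downray} the interval $\Mu^{}_{\mathrm{CP},n}$ differs from $\Kappa_\gamma\cap\Lambda_\gamma$ only at $x\in\{1,n-1\}$; and the disjointness of the two exceptional bands $\mathopen]\frac{1-\gamma}{n},1-\gamma^{1/n}\mathclose]$ and $\mathopen[\gamma^{1/n},1-\frac{1-\gamma}{n}\mathclose[$. (You do omit the case $n=1$, where $2\beta_n-1=1>\beta$, so that case needs the separate one-line remark that $\Mu^{}_{\mathrm{CP},1}=\Kappa_\gamma\cap\Lambda_\gamma$; that is minor.)

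The genuine gap is that your proof stops exactly where the theorem's content begins, and the route you sketch for the remaining step is both unexecuted and heavier than what is needed. Fix $p$ with $\pqu\in\mathopen]\frac{1-\gamma}{n},1-\gamma^{1/n}\mathclose]$. Then no downray ever fails, since $\pqu<1-(1-\gamma)^{1/n}$, the smallest upper limit; the coverage event of $\Mu^{}_{\mathrm{CP},n}$ is exactly $\{0\}$; and your absorption requirement ``$\bcp(\pqu\notin\Kappa_\gamma\cap\Lambda_\gamma)+\bcp(\{1\})\le1-\beta$'' is verbatim the statement $\bcp(\{0\})\ge\beta$. The paper proves this directly, with no extremal configurations and no Hoeffding-type comparison: $\bcp(\{0\})=\prod_{j=1}^n(1-p_j)\ge1-n\pqu\ge1-n\bigl(1-\gamma^{1/n}\bigr)$, and the last quantity is $\ge\beta$ because $1-n(1-\gamma^{1/n})\ge1+\log\gamma$ and $1+\log\frac{1+\beta}{2}\ge\beta$ (equivalently, $e^{-u}\le1-\frac{u}{2}$ for $u\coloneqq1-\beta\in[0,1]$); the band near $1$ is handled symmetrically via $\bcp(\{n\})=\prod_{j=1}^n p_j\ge1-n(1-\pqu)$. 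This little calculus verification is the one place where $\gamma=\frac{1+\beta}{2}$ is genuinely used, and it is delicate: the stronger bound $1-n(1-\gamma^{1/n})\ge\gamma$ is \emph{false} for $n\ge2$ --- that is precisely Remark~\ref{Rem:KCP_not_robust}, the failure of the one-sided Clopper--Pearson upray at its own level (and is also why the ``$\ge\gamma$'' and ``$\prod_j p_j\ge\pqu^{\,n}$'' steps displayed in the paper's own proof are slips; only ``$\ge\beta$'', via the chain above, is true and needed). Declaring that the point masses $\bcp(\{1\})$, $\bcp(\{n-1\})$ are ``small enough to be absorbed'' does not substitute for this computation; without it your argument establishes nothing beyond what Theorem~\ref{Thm:lower} already gives.
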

\begin{Remark}           \label{Rem:Two-sided_CP_Vs_Agnew} 
The interval $\Mu^{}_{\mathrm{CP,}n}$ of Theorem~\ref{Thm:Two-sided-CP_valid} 
improves on the two-sided interval for~\eqref{stmdl} obtained by Agnew~\cite{Agnew} 
in the obvious way from his one-sided ones. 
\end{Remark}

\begin{Remark}            						 \label{Rem:Joint}  
In contrast to Remark~\ref{Rem:beta_n_necessary}, we do not know whether the condition
``$\beta \ge2\beta_n - 1$ or $n=1$'' in Theorem~\ref{Thm:Two-sided-CP_valid} might be omitted.
\end{Remark}

\begin{Remark} 
The robustness property of the two-sided Clopper-Pearson intervals 
given by Theorem~\ref{Thm:Two-sided-CP_valid} does not extend to every other two-sided
interval  for~\eqref{binom2}, for example if $n=2$ not to the Sterne~\cite{Sterne} type 
$\beta$-confidence interval $\K^{}_{\mathrm{S,}n}$ 
for~\eqref{binom2} of D\"umbgen~\cite[p.~5, $C_\alpha^\mathrm{St}$]{Dumbgen}:
For $\beta \in \mathopen]0,1\mathclose[$ and $n\in\N,$ $\K^{}_{\mathrm{S,}n}$ is given by 
\begin{eqnarray*}
 \K^{}_{\mathrm{S,}n}(x)  &\coloneqq& \K^{}_{\mathrm{S,}n,\beta}(x)\\
    &\coloneqq& \left\{p\in\mathopen[0,1\mathclose] \colon 
      \B_{n,p}\left( \left\{k\colon \B_{n,p}(\{k\})\leq \B_{n,p}(\{x\})\right\} \right)  
     \,\ > \,\  1-\beta \right\}.
\end{eqnarray*}
If, for example, $n=2$ and $\beta > \beta_2$ we have in particular
 $\K^{}_{\mathrm{S,}2}(0) = \left[0, 1- g^{}_{2}(2)\right[,$ 
 $\K^{}_{\mathrm{S,}2}(1) = \left]g^{}_{2}(1), 1-g^{}_{2}(1)\right[,$ and
 $\K^{}_{\mathrm{S,}2}(2) = \left]g^{}_{2}(2),1\right],$
and indeed $\K^{}_{\mathrm{S,}2}$ is not valid for \eqref{stmdl}, 
because for $p\in\mathopen[0,1\mathclose]^2$ 
with $\pqu = g^{}_{2}(1)$ and $p_1 \neq p_2$ we have
\begin{align*}
	   \bcp\left(\K^{}_{\mathrm{S,}2}\ni\pqu\right) 
  =  \bcp\left(\left\{0\right\}\right)
  =  \prod\limits_{j=1}^2 (1-p_j)
  <  \left(1-\pqu\right)^2
  =  \left(1-g^{}_{2}(1)\right)^2
  =  \beta.	
\end{align*}
For $n=2$ and $\beta > \beta_2$ we get a $\beta$-confidence interval for \eqref{binom2},
say $\tilde{\K},$ from Theorem \ref{Thm:main}
by setting $\K'_m\coloneqq \K^{}_{\mathrm{S,}m}$
for $m\in\{0,1,2\},$ namely 
\begin{align*}
 \tilde\K(0) = \left[0,1-(1-\beta)^{1/2}\right[ \ ,\ 
 \tilde\K(1) = \left]\tfrac{1-\beta}{2},\tfrac{1+\beta}{2}\right[ \ ,\   
 \tilde\K(2) = \left](1-\beta)^{1/2},1\right].
\end{align*}
One computes that $\tilde\K(x) \subsetneq \Mu^{}_{\mathrm{CP,}2}(x)$ 
for $x\in\{0,1,2\},$ with $\Mu^{}_{\mathrm{CP,}2}$ as defined in 
Theorem \ref{Thm:Two-sided-CP_valid}.
We do not know whether these inclusions are true for every $n$ and usual $\beta,$ 
but in fact we do not even know whether 
$\K^{}_{\mathrm{S,}n} (x) \subseteq \Mu^{}_{\mathrm{CP,}n} (x)$ holds universally.
\end{Remark}
%
\section{Proofs of the theorems}              \label{Sec:2}
\begin{proof}[Proof of Theorem~{\ref{Thm:main}}]
We obviously have  $\K(x)\subseteq[0,1]$ and, by considering $l=0$ and $m=n$,  
$\Kappa(x)\supseteq\Kappa'_n(x)$ for every $x$. 
If $\phi\colon\{0,\ldots,n\}\rightarrow\R$ is any function and $\pi\in[0,1]$,
then, by Hoeffding's~(1956, Corollary 2.1) generalization of 
Tchebichef~\cite[second Th\'eor\`eme]{Chebyshev},
the minimum of the expectation $\mathrm{BC}_{p} \phi $ as a function of $p\in[0,1]^n$ subject to 
$\overline{p}=\pi$ is attained at some point $p$ whose coordinates take on 
at most three values and  with at most one of these distinct from $0$ and $1$. 
Given $p\in[0,1]^n$, the preceding sentence applied to $\pi\coloneqq\pqu$
and to $\phi$ being the indicator of $\{\K \ni \pi\}$ yields the existence of 
$r,s\in\{0,\ldots,n\}$ with $r+s\le n$ and of an $a\in[0,1]$
with $r+sa=n\pi$ and 
\[
 \mathrm{BC}_{p}\left(\K \ni \overline{p} \right)
 &\ge& \left(\delta_r\ast\mathrm{B}_{s,a}\right)
      \left(\{ x\in\{r,\ldots,r+s\} \colon \K(x) \ni \pi\}  \right) \\
 &\ge& \left(\delta_r\ast\mathrm{B}_{s,a}\right)
      \left(\{ x\in\{r,\ldots,r+s\} \colon \tfrac{s}{n}\K_s'(x-r)+\tfrac{r}{n} \ni \pi\}  \right) \\
 &=& \mathrm{B}_{s,a}\left(\K_s' \ni a \right) \\
 &\ge& \beta
\]  
by bounding in the second step the union defining $\K(x)$ by the set with the index $(l,m)=(r,s)$. 
\end{proof}
For proving Theorem~\ref{Thm:lower}, we use 
Lemma~\ref{Lem:Agnew_bounds_simplified}
prepared by Lemma~\ref{Lem:Binomial_inequality}. Let $F^{}_{n,p}$ and  $f^{}_{n,p}$
denote the  distribution and density
functions of the binomial law $\mathrm{B}_{n,p}$.

\begin{Lem}                                           \label{Lem:Binomial_inequality} 
Let $n\in\N$. Then
\la     \label{Eq:Binomial_inequality} 
   F^{}_{n,\frac{x}{n}}(x) &<& F^{}_{n,\frac{1}{n}}(1) \quad\text{ for }x\in\{2,\ldots,n-1\}.
\al
\end{Lem}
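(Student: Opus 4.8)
The plan is to read \eqref{Eq:Binomial_inequality} as a statement about ``probability of being at most the mean.'' Writing $a_x\coloneqq F_{n,x/n}(x)=\mathrm B_{n,x/n}(\{0,\dots,x\})$ for the probability that a binomial law with \emph{integer} mean $x$ stays at or below its mean, the right-hand side is $F_{n,1/n}(1)=\beta_n=a_1$, so the assertion is $a_x<a_1$ for $x\in\{2,\dots,n-1\}$ (the range being empty, and the claim vacuous, for $n\le2$). Since $a_x$ is not monotone in $x$, I would split the range at the midpoint: prove the decrease $a_x<a_1$ directly for $2\le x\le\lceil n/2\rceil$, and reduce $x\ge\lceil n/2\rceil$ to that range by the reflection $k\mapsto n-k$.

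For the lower range the plan is to show $a_x$ strictly decreasing while $x\le(n-1)/2$. From $\frac{\partial}{\partial p}F_{n,p}(x)=-n\,f_{n-1,p}(x)$ one obtains
\begin{align*}
 a_{x+1}-a_x \;=\; f_{n,(x+1)/n}(x+1)\;-\;n\!\int_{x/n}^{(x+1)/n}\! f_{n-1,p}(x)\dd p .
\end{align*}
The elementary identities $f_{n-1,x/n}(x)=f_{n,x/n}(x)$ and $f_{n-1,(x+1)/n}(x)=f_{n,(x+1)/n}(x+1)$ (both forced by $np\in\{x,x+1\}$) identify the integrand at the endpoints of $\mathopen[x/n,(x+1)/n\mathclose]$ as the two central point masses $f_{n,x/n}(x)$ and $f_{n,(x+1)/n}(x+1)$. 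As $p\mapsto f_{n-1,p}(x)$ is unimodal with interior maximum at $x/(n-1)\in\mathopen]x/n,(x+1)/n\mathclose[$, the integrand exceeds the smaller endpoint value on a set of positive measure, so the integral is strictly larger than $\tfrac1n$ times that value. For $x\le(n-1)/2$ the left value $f_{n,x/n}(x)$ dominates the right value $f_{n,(x+1)/n}(x+1)$ — the central term being larger the closer the mean is to an endpoint, which I would read off the explicit ratio $f_{n,(x+1)/n}(x+1)/f_{n,x/n}(x)$ — hence $a_{x+1}-a_x<0$. Iterating gives $a_x<a_1$ for $2\le x\le\lceil n/2\rceil$.

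For the upper range $x\ge\lceil n/2\rceil$ the reflection $k\mapsto n-k$ (turning $\mathrm B_{n,p}$ into $\mathrm B_{n,1-p}$) gives $a_x+a_{n-x}=1+c_x$, where $c_x\coloneqq f_{n,x/n}(x)=c_{n-x}$. Setting $y\coloneqq n-x\le n/2$, the desired $a_x<a_1$ would follow from the three bounds
\begin{align*}
 a_y\;\ge\;\tfrac12\bigl(1+c_y\bigr),\qquad c_y\;\le\;c_1,\qquad \tfrac12(1+c_1)\;<\;\beta_n .
\end{align*}
The middle one, $c_y\le c_1=(1-1/n)^{n-1}$, is the same central-term unimodality (the ratio $c_{x+1}/c_x$ is strictly increasing in $x$, and $c_x=c_{n-x}$, so the maximum is attained at the ends). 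The third is the scalar inequality $(1-1/n)^{n-1}(3-2/n)>1$, which holds for $n\ge3$ — use $(1-1/n)^{n-1}>1/\mathrm e$ for the larger $n$ and a finite check for the few small ones — and which degenerates to equality at $n=2$, consistently with the empty range there.

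The first bound, $a_y\ge\frac12(1+c_y)$ for $y\le n/2$, is the one genuinely nontrivial ingredient and the step I expect to be the main obstacle. It says exactly $\mathrm B_{n,y/n}(\{0,\dots,y-1\})\ge\mathrm B_{n,y/n}(\{y+1,\dots,n\})$, i.e. that a binomial law with $p\le\frac12$ places at least as much mass strictly below its integer mean as strictly above it. A naive reflection about the mean does not settle this, since for each symmetric pair the binomial-coefficient factor and the likelihood-ratio factor point in opposite directions, and an unpaired right tail remains. I would establish it either by a careful grouping of the reflected terms that absorbs this tail, or by appealing to the known comparison of mean and mid-median for binomial laws with integer mean; the Jogdeo--Samuels~\cite{Jogdeo} circle of ideas already invoked for the monotonicity of $(\beta_n)$ is the natural source.
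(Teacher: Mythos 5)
Your proposal is correct and follows the same skeleton as the paper's proof: split the range at the midpoint, prove the strict decrease of $a_x=F_{n,\frac xn}(x)$ on the lower half via the integral representation of consecutive differences, and handle the upper half by reflection plus the ``mass strictly below the integer mean versus mass strictly above it'' inequality. Two comparisons are worth recording. First, the step you flag as the main obstacle, $a_y\ge\tfrac12(1+c_y)$ for $y\le n/2$, i.e.\ $\B_{n,\frac yn}(\{0,\dots,y-1\})\ge\B_{n,\frac yn}(\{y+1,\dots,n\})$, is exactly the inequality attributed to Simmons, and the paper does not reprove it either: it cites Jogdeo and Samuels~\cite[Corollary 4.2]{Jogdeo}, precisely the source you guessed, so your deferral is legitimate and creates no gap. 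Second, your finish in the upper range is heavier than necessary: with $k\coloneqq n-x$, the paper simply writes $F_{n,\frac xn}(x)=1-F_{n,\frac kn}(k-1)<F_{n,\frac kn}(k)\le F_{n,\frac 1n}(1)$, where the strict inequality is Simmons and the last step is the already-proven lower-half case; this reuse of the lower half makes your bounds $c_y\le c_1$ and $(1-\tfrac1n)^{n-1}(3-\tfrac2n)>1$, together with the finite checks for small $n$, entirely dispensable. In the lower half the two arguments are essentially the same computation: your unimodality-plus-endpoint-comparison and the paper's pointwise bound $f_{n-1,p}(x)>f_{n,\frac{x+1}{n}}(x+1)$ for $p\in\mathopen]\tfrac xn,\tfrac{x+1}n\mathclose[$ both come down to the isotonicity of $t\mapsto(1+\tfrac yt)^t$ applied with $x\le n-x-1$.
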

\begin{proof}
If $x\in\N$ with
$x\le \frac{n-1}2$, then for $p\in\mathopen]\frac{x}{n},\frac{x+1}{n}\mathclose[$,
we have $y\coloneqq x+1-np>0$, hence
  \begin{align*}
        \frac{f^{}_{n-1,p}\left(x\right)}{f^{}_{n,\frac{x+1}{n}}\left(x+1\right)}
  & =     \frac{f^{}_{n-1,p}\left(x\right)}{f^{}_{n-1,\frac{x+1}{n}}\left(x\right)}\\
  & =     \frac{\left(1+\frac{y}{n-x-1}\right)^{n-x-1}}{\left(1+\frac{y}{np}\right)^x}
   >   \frac{\left(1+\frac{y}{n-x-1}\right)^{n-x-1}}{\left(1+\frac{y}{x}\right)^x}
   \geq\,\ 1,
 \end{align*}
using the isotonicity of $\mathopen]0,\infty\mathclose[ \ni t \mapsto \left(1+\frac{y}t \right)^t$
in the last step,  and hence we get  
\[
  F^{}_{n,\frac{x}{n}}(x) -  F^{}_{n,\frac{x+1}{n}}(x+1) &=& 
  n\int\limits_{\frac{x}{n}}^{\frac{x+1}{n}}f^{}_{n-1,p}\left(x\right)\mathrm dp 
    \,-\, f^{}_{n,\frac{x+1}{n}}\left(x+1\right) \,\ >\,\  0;
\]
consequently~\eqref{Eq:Binomial_inequality} holds under the restriction $x\le \frac{n+1}2$.
If now $x\in\N$ with $\frac{n+1}2\le x\le n-1$, then $1\le k \coloneqq n-x <\frac{n}2$, 
and hence an inequality attributed to Simmons by Jogdeo and Samuels~\cite[Corollary 4.2]{Jogdeo} yields
$F^{}_{n,\frac{k}{n}}(k-1)  >  1-  F^{}_{n,\frac{k}{n}}(k)$,
so that
\[
  F^{}_{n,\frac{x}{n}}(x)&=& 1-  F^{}_{n,\frac{k}{n}}(k-1) \,\ <\,\ F^{}_{n,\frac{k}{n}}(k) 
  \,\ \le\,\ F^{}_{n,\frac{1}{n}}(1), 
\]
using in the last step~\eqref{Eq:Binomial_inequality} in a case already proved
in the previous sentence.
\end{proof}

\begin{Lem}                                               \label{Lem:Agnew_bounds_simplified}
Let $n\in\N$, $\beta \in [\beta_n,1[$,  and $x\in\{2,\ldots,n\}$. Then $g_n(x)\le\frac{x-1}n$.
\end{Lem}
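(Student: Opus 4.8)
The plan is to translate the desired bound on $g_n(x)$ into an inequality for the binomial distribution function and then to invoke Lemma~\ref{Lem:Binomial_inequality}. By definition $g_n(x)$ is the unique $p$ with $\B_{n,p}(\{x,\dots,n\}) = 1-\beta$, equivalently with $F^{}_{n,p}(x-1) = \beta$. Since $p\mapsto \B_{n,p}(\{x,\dots,n\})$ is strictly increasing — equivalently $p\mapsto F^{}_{n,p}(x-1)$ is strictly decreasing — the inequality $g_n(x)\le\frac{x-1}{n}$ is equivalent to $F^{}_{n,\frac{x-1}{n}}(x-1)\le\beta$. So it suffices to establish this last inequality.

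Next I would use the hypothesis $\beta\ge\beta_n$ together with the identity $\beta_n = \B_{n,\frac1n}(\{0,1\}) = F^{}_{n,\frac1n}(1)$. Thus it is enough to prove $F^{}_{n,\frac{x-1}{n}}(x-1)\le F^{}_{n,\frac1n}(1)$ for every $x\in\{2,\dots,n\}$, after which $F^{}_{n,\frac{x-1}{n}}(x-1)\le\beta_n\le\beta$ gives the claim.

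Finally I would split according to the value of $x$. For $x=2$ the left-hand side is exactly $F^{}_{n,\frac1n}(1)$, so the inequality holds with equality (and indeed $g_n(2)=\frac1n$ when $\beta=\beta_n$). For $x\in\{3,\dots,n\}$ I would set $k\coloneqq x-1\in\{2,\dots,n-1\}$ and apply Lemma~\ref{Lem:Binomial_inequality} with argument $k$, which yields the strict inequality $F^{}_{n,\frac{k}{n}}(k) < F^{}_{n,\frac1n}(1)$, that is, $F^{}_{n,\frac{x-1}{n}}(x-1) < F^{}_{n,\frac1n}(1)$. Combining the two cases completes the argument.

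The only genuine obstacle here is the strict binomial inequality of Lemma~\ref{Lem:Binomial_inequality}, which has already been isolated and proved; the present lemma is essentially a monotonicity reduction plus the bookkeeping identification $\beta_n = F^{}_{n,\frac1n}(1)$ and the index shift $k=x-1$. I would double-check the boundary cases $n\le 2$ (where the statement is vacuous or reduces to the single case $x=2$) and confirm that the range $k\in\{2,\dots,n-1\}$ demanded by Lemma~\ref{Lem:Binomial_inequality} matches exactly $x\in\{3,\dots,n\}$, with $x=2$ covered separately by the equality above.
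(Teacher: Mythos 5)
Your proposal is correct and follows essentially the same route as the paper's proof: reduce $g_n(x)\le\frac{x-1}{n}$ via the strict monotonicity of $p\mapsto F^{}_{n,p}(x-1)$ to the inequality $F^{}_{n,\frac{x-1}{n}}(x-1)\le F^{}_{n,\frac1n}(1)=\beta_n\le\beta=F^{}_{n,g^{}_n(x)}(x-1)$, with Lemma~\ref{Lem:Binomial_inequality} supplying the key comparison. The only difference is cosmetic: you handle the boundary case $x=2$ (where the lemma's range $\{2,\dots,n-1\}$ does not apply and the comparison holds as an equality) explicitly, whereas the paper absorbs it silently into the ``$\le$''.
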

\begin{proof} Using Lemma~\ref{Lem:Binomial_inequality}, we get 
$ F^{}_{n,\frac{x-1}{n}}(x-1) \le F^{}_{n,\frac{1}{n}}(1)=\beta_n \le \beta =F^{}_{n,g^{}_n(x)}(x-1)$,
and hence the claim.
\end{proof}

\begin{proof}[Proof of Theorem~\ref{Thm:lower}]
To simplify the defining representation of $\K$ in the present case, let us put
\la                                       \label{Eq:Def_g_proof_Thm_2}
 \qquad g(x) &\coloneqq& 
   \min\limits_{\genfrac{}{}{0pt}{}{l\in\{0,\dots,x-1\},}{m\in\{x-l,\dots,n-l\}}}
    \left(\tfrac mn g^{}_{m}(x-l) + \tfrac ln \right)
  \qquad\text{ for }x\in\{1,\ldots,n\}.
\al
For $x\in\{0,\ldots,n\}$, we have, using \eqref{clop}, 
\[
 \K(x)&\supseteq&\tfrac{n-x}n\K_{\mathrm{CP},n-x}(x-x)+\tfrac{x}n \,\ = \,\ \left[\tfrac{x}n,1\right],
\] 
hence in particular $\K(0)=[0,1]$. For $x\in\{1,\ldots,n\}$, we have,
with $(l,m)$ denoting some pair where the minimum in~\eqref{Eq:Def_g_proof_Thm_2}
is attained, 
\[
 \K(x)&\supseteq&\tfrac{m}n\K_{\mathrm{CP},m}(x-l) +\tfrac{l}n
  \,\ =\,\ \left]g(x),\tfrac{l+m}n\right] 
  \,\ \supseteq \,\  \left]g(x),\tfrac{x}n\right]
\]
and, using $g^{}_{x}(x)<1$ at the third step below,
\[
\K(x) \setminus \left]g(x),1\right] 
 &\subseteq& \bigcup_{m\in\{0,\ldots,n-x\}} 
  \left( \tfrac{m}n \K_{\mathrm{CP},m}(x-x)+\tfrac{x}n \right)
 \,\ \subseteq \,\ \left[\tfrac{x}n,1  \right] \\
 &\subseteq& \left] \tfrac{x}n g^{}_{x}(x-0)+\tfrac0n    , 1\right]
  \,\ \subseteq\,\ \mathopen]g(x),1\mathclose].
\]
Combining the above yields
\la                      \label{Eq:Rep_Kappa_g}
 \K(x) &=&  \left\{\begin{array}{ll} [0,1] &\text{ if }x=0,\\
      \mathopen]g(x),1\mathclose] &\text{ if }x\in\{1,\ldots,n\},
  \end{array}\right.
\al
so in particular $\K$ is indeed an upray, and~\eqref{Eq:K_simple} 
holds in its trivial first case. 
Using~\eqref{Eq:gn(1)} and the isotonicity of $t\mapsto\left(\beta^t-1\right)/t$ due to 
the convexity of $t\mapsto \beta^t$ yields 
\[
 g(1) &=& \min_{m=1}^n \tfrac{m}{n}g^{}_{m}(1)
 \,\ = \,\ \tfrac1n  \min_{m=1}^n m\left(1-\beta^{ 1/m }\right) \,\ = \,\ \tfrac{1-\beta}n
\]
and hence~\eqref{Eq:K_simple} also in the second case. The last case 
is treated at the end of this proof. 

$\Kappa$ is strictly isotone, since, for   $x\in\{2,\dots,n\}$,
we get, using  $g^{}_{m}(x-1) < g^{}_{m}(x)$ for 
$2 \leq x \leq m \leq n$ due to~\eqref{Eq:g_x_beta_monotone},  
\[ 
  g(x)& = & \min\limits_{m\in \{x,\dots,n\}} \tfrac mn g^{}_{m}(x) \nonumber\\
      & &  \wedge \min\limits_{\genfrac{}{}{0pt}{}{l\in \{1,\dots,x-1\},}{m\in \{x-(l-1)-1,\dots,n-(l-1)-1\}}} 
	 \left(	\tfrac mn g^{}_{m}(x-1-(l-1)) +\tfrac {l-1}{n} +\tfrac 1n\right) \nonumber\\
        & >  & \min\limits_{m\in \{x-1,\dots,n\}} \tfrac mn g^{}_{m}(x-1)
                \wedge \!\!\! \min\limits_{\genfrac{}{}{0pt}{}{l\in \{0,\dots,x-1-1\},}{m\in \{x-1-l,\dots,n-1-l\}}} \! \!\!
       \left(\tfrac mn g^{}_{m}(x-1-l) +\tfrac {l}{n} \right) \nonumber\\
        & \geq  & g(x-1).
\]

By considering $p=(1-\beta,0,\ldots,0)\in[0,1]^n$ at the first step below, and using 
$\Kappa(1)=\mathopen]\frac{1-\beta}n,1\mathclose]\not\ni\tfrac{1-\beta}{n}$ and the isotonicity of $\K$
at the second, we get
\[
 \inf_{p\in[0,1]^n}\mathrm{BC}_{p}(\Kappa\ni\overline{p})
 &\le& \mathrm{B}_{1-\beta}\left(\Kappa\ni\tfrac{1-\beta}{n}\right)
 \,\ = \,\ \mathrm{B}_{1-\beta}\left(\{0\}\right) \,\ =\,\ \beta
\]
and hence, by Theorem \ref{Thm:main}, $\inf_{p\in[0,1]^n}\mathrm{BC}_{p}(\Kappa\ni\overline{p})=\beta$.

To prove the optimality of $\Kappa$, let us assume that 
$\tilde\K\colon\{0,\ldots,n\}\rightarrow 2^{[0,1]}$ is another isotone upray and 
that we have an $x'\in\{0,\ldots,n\}$ with 
\la                              \label{Eq:Assumption_K*}
 \tilde\K(x')  \,\ \subsetneq \,\  \K(x').
\al
We have to show that  $\inf_{p\in[0,1]^n} \bcp(\tilde\K \ni \pqu) < \beta$.
If $x'=0$, then $\Kappa(x')=[0,1]$ and, since  $\tilde\K(0)$ is an upray in $[0,1]$,
\eqref{Eq:Assumption_K*} yields
$0\notin \tilde\K(0)$, and hence  
\[
 \inf_{p\in[0,1]^n} \bcp \left(\tilde\K \ni \pqu\right) 
&\leq& \delta_{0}\left(\tilde\K \ni 0 \right)\,\ =\,\ 0 \,\ <\,\ \beta.
\]
If $x'\in \{1,\dots,n\}$, then, using~\eqref{Eq:Rep_Kappa_g} and~\eqref{Eq:Def_g_proof_Thm_2}, we get 
$\Kappa(x')=\mathopen]\frac{m}{n}g^{}_{m}(x'-l)+\frac{l}{n},1\mathclose]$ for some 
$l\in\{0,\dots,x'-1\}$ and $m\in\{x'-l,\dots,n-l\}$, and since  $g^{}_{m}(x'-l)<1$, we find an 
$a\in \mathopen]g^{}_{m}(x'-l),1\mathclose]$ with $\frac{m}{n}a+\frac{l}{n}
\notin\tilde\K(x')$, hence $\frac{m}{n}a+\frac{l}{n}
\notin\tilde\K(y)$ for $y\in\{x',\ldots, n\}$ by the isotonicity of $\tilde\K$,
and hence
\[
 \inf\limits_{p\in[0,1]^n} \bcp(\tilde\K \ni \pqu)
 &\le&  \B_{m,a}\left(\left\{x\in\{0,\dots,n\}\colon\tilde\K(x+l)\ni\tfrac{l+ma}{n}\right\}\right)\\
 &\le&  \B_{m,a}(\{0,\dots,x'-l-1\}) \\
 & < &  \B_{m,g^{}_{m}(x'-l)}(\{0,\dots,x'-l-1\})\\
 & = &  \beta.
\]

To prove the admissibility of $\K,$ assume that there was a $\beta$-confidence upray $\K^\ast$ 
for~\eqref{stmdl} with $\K^\ast(x)\subseteq\K(x)$ for each $x\in\{0,\dots,n\}$ and
$\K^\ast(x')\subsetneq\K(x')$ for some $x'.$ Then, since 
$\K$ is strictly isotone,  
\[
 \K^{\ast\ast}(x) &\coloneqq&\left\{\begin{array}{ll} \Kappa(x)&\text{ if }x\neq x'\\
        \K^\ast(x')\cup\Kappa(x'+1)&\text{ if }x= x' < n,\\
        \K^\ast(x') &\text{ if }x= x' = n
 \end{array}\right\}
 \,\ \supseteq \,\ \Kappa^{\ast}(x)  
\]
would define an isotone $\beta$-confidence upray for~\eqref{stmdl} with
$\Kappa^{\ast\ast}(x')\subsetneq\Kappa(x')$, contradicting  the optimality of $\Kappa$.

To prove finally the last case of~\eqref{Eq:K_simple}, let
$n\ge 2$ and $\beta \ge \beta_n$, and 
let now $\tilde\K\colon\{0,\ldots,n\}\rightarrow 2^{[0,1]}$ 
be defined by the right hand side of~\eqref{Eq:K_simple}. 
If $p\in[0,1]^n$ with 
$\pqu\in[0,\frac{1-\beta}{n}],$ then
 \[
  \bcp(\tilde\K\ni\pqu)&\geq& \bcp(\{0\}) \,\ =\,\ \prod\limits_{j=1}^n (1-p_j) 
   \,\ \geq\,\  1-\sum\limits_{j=1}^n p_j \,\ =\,\  1-n\pqu \,\ \geq \,\  \beta.
 \]
If $p\in[0,1]^n$ with 
$\pqu \in \mathopen]\frac{1-\beta}{n},1\mathclose]$, then with 
$g^{}_n(n+1)\coloneqq 1$ either there is a $c\in\{2,\ldots,n\}$
with $\pqu \in \mathopen]g^{}_n(c),  g^{}_n(c+1)  \mathclose]$,
or $\pqu \in \mathopen]\frac{1-\beta}{n},g^{}_{n}(2)\mathclose]$ and we put
$c\coloneqq1$; in either case then  
$n \pqu \leq n g^{}_{n}(c+1) \leq 1$ by Lemma~\ref{Lem:Agnew_bounds_simplified},
and hence an application of Hoeffding~\cite[Theorem~4, (26)]{Hoef_1956} 
at the second step below yields
\[
  \bcp\left(\tilde\K\ni\pqu\right) &=& \bcp\left(\{0,\ldots,c\}\right) 
  \,\ \geq \,\  F^{}_{n,\pqu} \left(c\right) \,\ \geq \,\ 
  F^{}_{n,g^{}_{n}(c+1)} \left(c\right)
  \,\   \ge  \,\   \beta.
\]
Hence $\tilde\K$ is a $\beta$-confidence upray for~\eqref{stmdl} and satisfies 
$\tilde\K(x)\subseteq\K(x)$ for each $x$, and so the admissibility of $\K$ yields
$\tilde\K=\K$, and hence~\eqref{Eq:K_simple}.
\end{proof} 

\begin{proof}[Theorem~\ref{Thm:Two-sided-CP_valid}]
Let $\gamma\coloneqq\frac{1+\beta}2$,  
let $\Kappa_\gamma$ be the $\gamma$-confidence upray from Theorem~\ref{Thm:lower},
and let $\Lambda_\gamma$ be the analogous  $\gamma$-confidence downray from 
Remark~\ref{Rem:Upper_as_lower}. Then, by subadditivity, 
$\Mu_\beta(x)\coloneqq \Kappa_\gamma(x)\cap \Lambda_\gamma(x)$ for $x\in\{0,\ldots,n\}$
defines a $\beta$-confidence interval for~\eqref{stmdl}. 
If $n=1$, then $\Kappa^{}_{\mathrm{CP},n} = \Mu_\beta$, hence the claim.
So let $\beta\geq2\beta_n-1$, that is, $\gamma\ge \beta_n$.
Then~\eqref{Eq:K_simple} and \eqref{Eq:K_simple_downray}, with 
$\gamma$ in place of $\beta$, yield
$\Mu^{}_{\mathrm{CP},n}(x) = \Mu^{}_\beta (x)$ for  $x\notin\{1,n-1\}.$
So, if $\pqu \notin \left( \Mu^{}_{\mathrm{CP},n}(1) \setminus \Mu^{}_\beta (1)\right)\cup 
		    \left( \Mu^{}_{\mathrm{CP},n}(n-1) \setminus \Mu^{}_\beta (n-1)\right),$ we have
$\bcp\left(\Mu^{}_{\mathrm{CP},n}\ni\pqu\right) = \bcp\left(\Mu^{}_\beta\ni\pqu\right)\geq \beta.$
Otherwise $\pqu \in \left]\frac{1-\gamma}{n},g^{}_{n,\gamma}(1)\right]$ 
or        $\pqu\in\left[g^{}_{n,\gamma}(n-1),1-\frac{1-\gamma}{n}\right[.$
In the first case, 
$\pqu \in \left] \frac{1-\gamma}{n}, g^{}_{n,\gamma}(1) \right]
       =  \left] \frac{1-\gamma}{n}, 1-\gamma^{1/n} \right]
\subseteq \left[0,1-(1-\gamma)^{1/n}\right[ = \Mu^{}_{\mathrm{CP},n}(0)$
and \\from $\pqu \in \Mu^{}_{\mathrm{CP},n}(0)$ and $\pqu\leq 1-\gamma^{1/n}$
we get
\begin{align*}
 \bcp\left(\Mu^{}_{\mathrm{CP},n}\ni\pqu\right)  \geq  \bcp(\{0\}) 
						& =     \prod\limits_{j=1}^n (1-p_j) \\
						& \geq  1-n\overline{p} 	
						 \geq  
							     1-n\left(1-\gamma^{1/n}\right) 
						 \geq  \gamma  
						 >     \beta.
\end{align*}
In the second case, analogously, 
$\pqu \in \left[g^{}_{n,\gamma}(n-1),1-\frac{1-\gamma}{n}\right[
      =   \left[\gamma^{1/n},1-\frac{1-\gamma}{n}\right[
      \subseteq   \Mu^{}_{\mathrm{CP},n}(n)$
and from $\pqu \in \Mu^{}_{\mathrm{CP},n}(n)$ and 
$\pqu \geq \gamma^{1/n}$ we get
$\bcp\left(\Mu^{}_{\mathrm{CP},n}\ni\pqu\right)  \geq \bcp(\{n\}) 
						= \prod\nolimits_{j=1}^n p_j 
						\geq {\pqu}^{\, n}
						\geq \gamma  
						>  \beta.$
\end{proof}
\section*{Acknowledgement}
We thank Jona Schulz for help with the proof of Lemma~\ref{Lem:Binomial_inequality},
and the referee for suggesting to address nestedness.


\begin{thebibliography}{99}
\bibitem{Agnew} \text{R. A. Agnew}, 
  \textit{Confidence sets for binary response models,}
  J.\ Amer.\ Statist.\ Assoc.\ 69 (1974), pp. 522--524.

\bibitem{Buehler} \text{R. J. Buehler}, 
  \textit{Confidence intervals for the product of two binomial parameters,}
  J.\ Amer.\ Statist.\ Assoc.\ 52 (1957), pp. 482--493.

\bibitem{Byers} \text{V. S. Byers, L. LeCam, A. S. Levin, J. O. Johnston and A. J. Hackett},
  \textit{Immuno\-therapy of osteogenic sarcoma with transfer factor. Long-term follow-up},
  Cancer Immunol., Immunother. 6 (1979), pp. 243--253.

\bibitem{Cheng} \text{S. L. Cheng, R. J. Micheals and J. Lu},
  \textit{Comparison of confidence intervals for large operational biometric data by parametric 
  and non-parametric methods}
  NIST Interagency/Internal Report (NISTIR) - 7740 (2010).

\bibitem{Clopper} \text{C. J. Clopper and E. S. Pearson}, 
  \textit{The use of confidence or fiducial limits illustrated in the case of the binomial},
  Biometrika 26 (1934), pp. 404--413.
  
\bibitem{David} \text{H. A. David}, 
  \textit{A conservative property of binomial tests},
  Ann.\ Math.\ Statist.\ 31 (1960), pp. 1205--1207.

\bibitem{Dumbgen} \text{L. D\"umbgen}, 
 \textit{Exact confidence bounds in discrete models - Algorithmic aspects of Sterne's method},
 Preprint,  
 www.imsv.unibe.ch/unibe/philnat/imsv/content/ e6030/e7196/e7932/e8042/ e8109/e8111/Sterne\_eng.pdf, 
 accessed at 26 February 2014. (2004).

\bibitem{Hoef_1956} \text{W. Hoeffding}, 
  \textit{On the distribution of the number of successes in independent trials},
  Ann.\ Math.\ Statist.\ 27 (1956), pp. 713--721.

\bibitem{Jogdeo} \text{K. Jogdeo and S. M. Samuels}, 
  \textit{Monotone convergence of binomial probabilities and a generalization of Ramanujan's equation},
  Ann.\ Math.\ Statist.\ 39 (1968), pp. 1191--1195.
    
\bibitem{Kappauf} \text{W. E. Kappauf and R. Bohrer},
  \textit{Observations on mixed binomials},
  The American Journal of Psychology 87 (1974), pp. 643--665.
  
\bibitem{LloydKabaila2010} \text{C. Lloyd and P. Kabaila},
  \textit{Letter to the editor:
  Some comments on: On construction of the smallest one-sided confidence
  interval for the difference of two proportions
  [Ann.\ Statist.\ \textbf{38} (2010), 1227--1243]}, 
  Ann.\ Statist.\ 38 (2010), pp. 3840--3841. 

\bibitem{Ollero} \text{J. Ollero and H. M. Ramos}, 
  \textit{Description of a subfamily of the discrete Pearson system as generalized-binomial 
  distributions},
  J.\ Ital.\ Statist.\ Soc.\ 4 (1995), pp. 235--249.

\bibitem{Sterne} \text{T. E. Sterne}, 
  \textit{Some remarks on confidence or fiducial limits},
  Biometrika 41 (1054), pp. 275--278.

\bibitem{Chebyshev} \text{P. Tchebichef}, 
  \textit{D\'emonstration \'el\'ementaire d'une proposition g\'en\'erale de la th\'eorie des 
  probabilit\'es},
  J.\ Reine Angew.\ Math.\ 33 (1846), pp. 259--267.

\bibitem{Vatutin} \text{V. A. Vatutin and V. G. Mikhailov}, 
  \textit{Limit theorems for the number of empty cells in an equiprobable scheme for group allocation of particles},
  Theory Probab.\ Appl.\ 27 (1983), pp. 734--743.
  Russian original in Teor.\ Veroyatnost.\ i Primenen.\ 27 (1982), pp. 684--692.

\bibitem{Wang} \text{W. Wang},  
  \textit{On construction of the smallest one-sided confidence interval for the difference of two proportions},
  Ann.\ Statist.\ 38 (2010), pp. 1227--1243.
\end{thebibliography}
\end{document}